\newtheorem{thm}{Theorem}[section]
\newtheorem{prop}[thm]{Proposition}
\newtheorem{cor}[thm]{Corollary}
\newtheorem{conj}[thm]{Conjecture}
\newcommand{\bbZ}{\mathbb{Z}}
\newcommand{\tate}{\hat{H}}
\newcommand{\cHom}{\mathscr Hom}
\DeclareMathOperator{\Hom}{Hom} 
\DeclareMathOperator{\rk}{rk}
\DeclareMathOperator{\hrk}{hrk}
\DeclareMathOperator{\image}{im}
\DeclareMathOperator{\Ext}{Ext}
\def\maprt#1{\smash{\,\mathop{\longrightarrow}\limits^{#1}\,}}
\begin{document}
\title{Free Actions on Products of Spheres at High Dimensions}
\author{Osman Berat Okutan and Erg\" un Yal\c c\i n\thanks{Partially supported by T\" UB\. ITAK-TBAG/110T712.}}
\maketitle

\begin{abstract} A classical conjecture in transformation group theory states that if $G=(\bbZ/p)^r$ acts freely on a product of $k$ spheres $S^{n_1} \times \cdots \times S^{n_k}$, then $r\leq k$. We prove this conjecture in the case where the average of the dimensions $\{n_i\}$ is large compared to the differences $|n_i-n_j|$ between the dimensions. 
\end{abstract}

\section{Introduction}

Let $G$ be a finite group. The rank of $G$, denoted by $\rk (G)$, is defined as the largest integer $s$ such that $(\bbZ/p)^s\leq G$ for some prime $p$. It is known that $G$ acts freely and cellularly on a finite complex homotopy equivalent to a sphere $S^n$ if and only if $\rk (G)=1$. This follows from the results due to P.A. Smith \cite{smith} and R. Swan \cite{swan}. As a generalization of this, it has been conjectured by Benson-Carlson \cite{benson-carlson} that $\rk (G)=\hrk(G)$ where $\hrk(G)$ is defined as the smallest integer $k$ such that $G$ acts freely and cellularly on a finite CW-complex homotopy equivalent to  a product of $k$ spheres. This conjecture is often referred to as the rank conjecture. Note that one direction of the Benson-Carlson conjecture is the following statement:  

\begin{conj}\label{conj:main} Let $p$ be a prime. If $G=(\bbZ/p)^r$ acts freely and cellularly on a finite CW-complex $X$ homotopy equivalent to $S^{n_1} \times \dots \times S^{n_k}$, then $r\leq k$.
\end{conj}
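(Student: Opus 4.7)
The plan is to use the Borel construction together with the Leray--Serre spectral sequence to extract from $H^*(X_G;\mathbb{F}_p)$ a system of $k$ classes in $H^*(BG;\mathbb{F}_p)$ whose quotient ring is finite-dimensional, and then invoke Quillen's theorem identifying the Krull dimension of $H^*(BG;\mathbb{F}_p)$ with $r$ to force $r\leq k$. All cohomology is with $\mathbb{F}_p$-coefficients, where $p$ is the prime for which $G=(\bbZ/p)^r$.

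Setup: form the Borel construction $X_G=EG\times_G X$. Since $G$ acts freely on $X$, the natural map $X_G\to X/G$ is a homotopy equivalence, so $X_G$ has the homotopy type of a finite CW complex of dimension at most $N=n_1+\cdots+n_k$; in particular $H^i(X_G;\mathbb{F}_p)=0$ for $i>N$. Consider the Serre spectral sequence of the fibration $X\to X_G\to BG$, with $E_2^{s,t}=H^s(BG;H^t(X;\mathbb{F}_p))\Rightarrow H^{s+t}(X_G;\mathbb{F}_p)$. By K\"unneth $H^*(X;\mathbb{F}_p)\cong \Lambda(x_1,\ldots,x_k)$ with $|x_i|=n_i$, and after replacing $G$ by a rank-preserving finite-index subgroup I may assume $G$ acts trivially on $H^*(X;\mathbb{F}_p)$, so the $E_2$-page splits as a tensor product.

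Extraction of classes: order the generators so $n_1\leq\cdots\leq n_k$. The class $x_1$ is transgressive for degree reasons, giving $\alpha_1=d_{n_1+1}(x_1)\in H^{n_1+1}(BG;\mathbb{F}_p)$. Inductively, after accounting for the previous $x_j$'s, the class $x_i$ is either transgressive or is hit by a differential mixing it with monomials in the earlier $x_j$'s; in either case extract a class $\alpha_i\in H^*(BG;\mathbb{F}_p)$. Because $H^*(X_G;\mathbb{F}_p)$ vanishes in degrees $>N$, every sufficiently high-degree class in $H^*(BG;\mathbb{F}_p)$ must be hit by some differential in the spectral sequence, which should translate into the statement that $H^*(BG;\mathbb{F}_p)$ is finite as a module over the subalgebra generated by $\alpha_1,\ldots,\alpha_k$. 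Since $H^*(BG;\mathbb{F}_p)$ has Krull dimension $r$ by Quillen's theorem, this subalgebra also has Krull dimension $r$, which is impossible if $k<r$; therefore $r\leq k$.

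The main obstacle, and the reason this argument does not close off the conjecture at present, is the extraction step. For $k\geq 2$ the generators $x_i$ need not each individually transgress: the first nonzero differential on $x_i$ can be some $d_r$ with $r<n_i+1$, and its image can involve products with earlier $x_j$'s. These ``mixing'' differentials produce relations that are not homogeneous in a single polynomial generator of $H^*(BG;\mathbb{F}_p)$, and it is genuinely subtle to ensure that the $k$ extracted classes cut out a finite-dimensional quotient rather than merely a smaller positive-dimensional subvariety. When the dimensions $n_i$ are large relative to the differences $|n_i-n_j|$, dimensional considerations tightly constrain the mixing and the argument can be pushed through (this is what the paper's main theorem accomplishes); for the unrestricted conjecture, controlling these differentials in every degree pattern seems out of reach via spectral-sequence bookkeeping alone, and a successful attack will likely have to be recast in terms of the complexity and support-variety theory of the cellular chain complex $C_*(X;\mathbb{F}_p)$ viewed as a bounded complex of free $\mathbb{F}_pG$-modules.
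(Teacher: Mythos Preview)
The statement you are addressing is labeled Conjecture~\ref{conj:main} in the paper and is \emph{not} proved there; the paper establishes only the restricted case recorded as Theorem~\ref{thm:maintheorem}, where the $n_i$ are large relative to the gaps $|n_i-n_j|$. So there is no ``paper's own proof'' of the full conjecture to compare your proposal against, and you yourself acknowledge that your argument does not close off the general statement. That said, two concrete points deserve comment.

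First, the reduction ``after replacing $G$ by a rank-preserving finite-index subgroup I may assume $G$ acts trivially on $H^*(X;\mathbb{F}_p)$'' is not available: an elementary abelian $p$-group $(\bbZ/p)^r$ has no proper subgroup of rank $r$, so the only rank-preserving subgroup is $G$ itself. The action of a $p$-group on a finite $\mathbb{F}_p$-vector space is merely unipotent, not trivial, and handling the resulting twisted coefficients in the Serre spectral sequence is exactly one of the known difficulties (cf.\ the distinction between Carlsson's trivial-action result and the Adem--Browder extension). Second, you correctly isolate the genuine obstruction: once $k\geq 2$ the fibre generators $x_i$ need not transgress, and the mixed differentials do not in general produce $k$ classes over which $H^*(BG;\mathbb{F}_p)$ is module-finite. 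The paper's approach to its special case is orthogonal to your spectral-sequence outline: it works integrally with the cellular chain complex, uses Habegger's gluing (Theorem~\ref{gluing lemma}) to concentrate homology at multiples of $n$, then applies Browder's exponent inequality (Theorem~\ref{thm:browder}) together with Pakianathan's eventual-exponent bound (Theorem~\ref{jonathan's lemma}). The hypothesis $n_i\gg |n_i-n_j|$ enters not to tame spectral-sequence differentials but to guarantee (i) that the homology bands on successive rows are disjoint so gluing is possible, and (ii) that the relevant cohomology degrees exceed the threshold $N$ in Theorem~\ref{jonathan's lemma}.
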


This conjecture is a classical conjecture which has been studied intensely through 80's and it has been proven that the conjecture is true under some additional assumptions. For example it is known that when the dimensions of the spheres are all equal, i.e., $n=n_1=\cdots =n_k$, then the conjecture is true for all primes $p$ and for all positive integers $n$ except when $p=2$ and $n=3,7$. This was proved by G. Carlsson \cite{carlsson} in the case where the $G$-action on the homology of $X$ is trivial and the general case is due to Adem-Browder \cite{adem-browder}. The $p=2$ and $n=1$ case was proven later in \cite{yalcin}. More recently, B. Hanke \cite{hanke} proved that Conjecture \ref{conj:main} is true in the case where $p\geq 3 \dim X$, i.e., when the prime $p$ is large compared to the dimension of the space. In this paper, we prove Conjecture \ref{conj:main} for the other extreme, i.e., when the average of the dimensions is high compared to the differences between them.
 
\begin{thm}\label{thm:maintheorem} Let $G=(\bbZ/p)^r$ and $k,l$ be positive integers. Then there exists an integer $N$ (depending on $k,l$ and the group $G$) such that if $G$ acts freely and cellularly on a finite dimensional CW-complex $X$ homotopy equivalent to $S^{n_1} \times \dots \times S^{n_k}$ where $n_i \geq N$ and $|n_i-n_j|\leq l$ for all $i,j$, then $r\leq k$.
\end{thm}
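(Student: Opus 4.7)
The plan is to analyze the Leray--Serre spectral sequence of the Borel fibration $X \to EG \times_G X \to BG$ and to derive $r \leq k$ by a Krull-dimension argument in $H^*(BG; \mathbb{F}_p)$. Since $G$ acts freely with finite-dimensional quotient, $EG \times_G X$ is homotopy equivalent to $X/G$, so $H^*(EG \times_G X; \mathbb{F}_p)$ vanishes above degree $\dim X$. This bounded abutment is what will eventually force the transgressed classes coming from the sphere factors to cut out a zero-dimensional subscheme of $\operatorname{Spec} H^*(BG; \mathbb{F}_p)_{\mathrm{red}}$.

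Under the simplifying hypothesis that $G$ acts trivially on $H^*(X; \mathbb{F}_p)$, to which one can partially reduce by passing to the kernel of the cohomology action (with a rank loss bounded in terms of $k$ and absorbed into $N$), one has $E_2 = H^*(BG; \mathbb{F}_p) \otimes \Lambda(x_1, \dots, x_k)$ with $x_i$ of bidegree $(0, n_i)$. For each $i$ I would show that $x_i$ transgresses, modulo corrections from earlier differentials, to a class $\alpha_i \in H^{n_i + 1}(BG; \mathbb{F}_p)$. The bounded abutment then forces the ideal $(\alpha_1, \dots, \alpha_k) \subseteq H^*(BG; \mathbb{F}_p)$ to have finite codimension. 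At odd primes with $n_i$ even, $\alpha_i$ lies in odd degree and is itself nilpotent, so it must be replaced by its Bockstein $\beta(\alpha_i) \in H^{n_i + 2}(BG; \mathbb{F}_p)$ before the final argument is run. Since $H^*(BG; \mathbb{F}_p)$ modulo nilpotents is a polynomial ring of Krull dimension $r$, any generating set of an ideal of finite codimension must have at least $r$ elements, giving $k \geq r$.

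Two technical obstacles are expected. The first is controlling the non-transgressive differentials on $x_i$ when the $n_i$ differ: a differential $d_r(x_i) \in E_r^{r, n_i - r + 1}$ can be non-zero only when $n_i - r + 1$ is expressible as a sum of a subset of the $n_j$'s, and given $n_j \in [N, N + l]$, this forces $r \leq l + 1$, because any sum of two or more $n_j$'s already exceeds $n_i$. Thus all such corrections occur on pages far below the transgression page $n_i + 1 \geq N + 1$, which is precisely what having $N$ large buys. The harder obstacle is the inductive bookkeeping: each modified $\alpha_i$ is determined only up to elements of the ideal generated by previously transgressed classes, and one must verify that after all corrections the resulting classes remain sufficiently independent in the reduced ring to cut out a zero-dimensional subscheme. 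Handling the genuinely non-trivial action on $H^*(X; \mathbb{F}_p)$ (permutations of equal-dimensional sphere factors) is a variant of the same problem and likely requires working with $G$-orbit sums and invoking Shapiro's lemma for the stabilizer subgroups, so that the total count of extracted parameters still does not exceed $k$. Choosing $N$ large relative to $l$, $k$, and the structure of $H^*(BG; \mathbb{F}_p)$ provides the separation needed to close all of this iteration.
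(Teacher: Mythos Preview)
Your approach via the Borel spectral sequence with $\mathbb{F}_p$ coefficients and a Krull-dimension count is genuinely different from the paper's, which works integrally with Tate hypercohomology. The paper uses Habegger's gluing (Theorem~\ref{gluing lemma}) to replace $C_*(X)$ by a freely equivalent complex $D_*$ with homology concentrated in dimensions $0,n,2n,\ldots,kn$; setting $M_j=H_{jn}(D_*)$ and applying Browder's theorem (Theorem~\ref{thm:browder}) gives that $p^r$ divides $\prod_j exp\, H^{jn+1}(G,M_j)$. Pakianathan's result (Theorem~\ref{jonathan's lemma}) then says each $exp\, H^i(G,M_j)$ divides $p$ once $i$ is large enough, and the Jordan--Zassenhaus theorem is invoked to show that for fixed $G,k,l$ only finitely many stable equivalence classes of $M_j$ can occur, so a single threshold $N$ works for all of them. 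In the paper the integer $N$ thus enters for a concrete reason: it is the degree beyond which all relevant cohomology exponents drop to $p$, and its dependence on $G$ comes from Pakianathan's bound.

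Your sketch, by contrast, has gaps that do not look closable as stated. First, the reduction to trivial action is not valid: passing to the kernel $H$ of the action on $H^*(X;\mathbb{F}_p)$ and proving $\rk(H)\leq k$ yields only $r\leq k+s$ with $s=\rk(G/H)$; this loss depends on $k$ alone and cannot be ``absorbed into $N$,'' since $N$ constrains the $n_i$ and has no bearing on the rank. Second, and more fundamentally, your page-separation observation (non-transgressive differentials on pages $\leq l+1$, transgressions on pages $\geq N+1$) is correct but does not by itself produce well-defined classes $\alpha_i$ or force them to generate a finite-codimension ideal. If some early $d_r(x_i)=\sum_j c_{ij}x_j$ is nonzero in $E_r$, then $x_i$ need not survive to transgress at all, and you give no mechanism for extracting a replacement class in $H^{n_i+1}(BG;\mathbb{F}_p)$. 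The edge homomorphism certainly has a finite-codimension kernel $I\subset H^*(BG;\mathbb{F}_p)$, but bounding the number of generators of $I$ by $k$ is precisely the hard step, and the ``inductive bookkeeping'' paragraph defers rather than resolves it; the Bockstein substitution $\alpha_i\mapsto\beta(\alpha_i)$ for even $n_i$ at odd $p$ is a further unjustified move, since $\beta(\alpha_i)$ may vanish. Finally, note that your separation condition already holds once $N>lk$, independently of $G$; if that alone closed the argument you would be proving a far stronger statement than the paper's, so a genuine dependence of $N$ on $G$ must enter somewhere, and your outline does not identify where.
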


The proof follows from a theorem of Browder \cite{browder} which gives a restriction on the order of groups acting freely on a finite dimensional CW-complex in terms of homology groups of the complex.  We also use a method of gluing homology groups at different dimensions which we first saw in a paper by Habegger \cite{habegger} and a crucial result on the exponents of cohomology groups of elementary abelian $p$-groups which is due to Pakianathan \cite{pakianathan}.

At the end of the paper we also prove a generalization of Theorem \ref{thm:maintheorem} to non-free actions which was suggested to us by A. Adem.

The paper is organized as follows: In Section \ref{sect:hypercohomology}, we list some well-known results about hypercohomology and in Section \ref{sect:Habegger}, we introduce Habegger's theorem on gluing homology at different dimensions. In Section \ref{sect:exponents}, we discuss the exponents of Tate cohomology groups and in Section \ref{sect:maintheorem}, we prove Theorem \ref{thm:maintheorem} which is our main theorem.

\section{Tate Hypercohomology}\label{sect:hypercohomology}

Let $G$ be a finite group and $M$ be a $\bbZ G$-module. The Tate cohomology of $G$ with coefficients in $M$ is defined as follows
$$ \tate^i(G,M):=H^i(\Hom_G(F_*,M)) $$
for all $i \in \bbZ$, where $F_*$ is a complete $\bbZ G$-resolution of $\bbZ$ (see \cite[p.~134]{brown}). We can generalize this and define Tate hypercohomology of $G$ with coefficients in a chain complex $C_*$ of $\bbZ G$-modules. To do this, we need to extend the contravariant functor $\Hom_G(-,M)$ to $\cHom_G(-,C_*)$. We will define it as in Brown (see \cite[p.~5]{brown}), but instead of defining it as a chain complex, we consider it as a cochain complex.

For all $n \in \bbZ$, let $\cHom_G(C_*,D_*)^n$ denote the set of graded $G$-module homomorphisms of degree $-n$ and define the boundary map $\delta^n$ by $\delta^n(f)= f\partial - (-1)^n \partial f$. Note that $\cHom_G(-,C_*)$ (resp. $\cHom_G(C_*,-)$) becomes a covariant (resp. contravariant) functor from the category of chain complexes of $\bbZ G$-modules to the category of cochain complexes of abelian groups. Also, if $C_*$ is a chain complex concentrated at 0 with $C_0=M$, then $\cHom_G(-,C_*)$ is naturally equivalent to the functor $\Hom_G(-,M)$.

Now, we define the Tate hypercohomology of a finite group $G$ with coefficients in $C_*$ as follows:
\[\tate^i(G,C_*):=H^i(\cHom_G(F_*,C_*))\]
for all $i\in \bbZ$, where $F_*$ is a complete $\bbZ G$-resolution of $\bbZ$. We immediately have $\tate^i(G,\Sigma C_*)\cong \tate^{i+1}(G,C_*)$, where $(\Sigma C_*)_i=C_{i-1}$ for all $i$. Therefore, if $C_*$ is a chain complex concentrated at $n$, then $\tate^i(G, C_*)\cong \tate^{i+n}(G,C_n)$. Also note that given a short exact sequence of chain complexes
\[0 \to C_* \to D_* \to E_* \to 0 \]
of $\bbZ G$-modules,
there is a long exact sequence of the following form
\[\cdots \to \tate^i(G,C_*) \to \tate^i(G,D_*) \to \tate^i(G,E_*) \to \tate^{i+1}(G,C_*) \to \cdots .\]

An important property of $\cHom$ functor is that if $P_*$ is a chain complex of projective $\bbZ G$-modules and $f_*:C_* \to D_*$ a weak equivalence of nonnegative chain complexes of $\bbZ G$-modules, then $f_*:\cHom_G(P_*,C_*) \to \cHom_G(P_*,D_*)$ is also a weak equivalence (see \cite[p.~29]{brown}). Actually, Brown proves this result by assuming $P_*$ is nonnegative and $C_*$ and $D_*$ are arbitrary, but the same proof remains true if we assume $P_*$ is arbitrary and $C_*$ and $D_*$ are nonnegative.  Using this, we obtain the following proposition:

\begin{prop}\label{concentrated homology lemma}
If $C_*$ is a nonnegative chain complex of $\bbZ G$-modules with homology concentrated at dimension $n$ and $H_n(C_*)=M$, then $\tate^i(G,C_*)\cong \tate^{i+n}(G,M)$.
\end{prop}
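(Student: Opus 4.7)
The plan is to reduce $C_*$ to the much simpler complex, call it $M[n]$, having $M$ in degree $n$ and $0$ elsewhere, by a zigzag of weak equivalences of nonnegative $\bbZ G$-chain complexes, then apply the $\cHom$ weak-equivalence property stated just before the proposition with $P_*=F_*$, and finally iterate the shift isomorphism $\tate^i(G,\Sigma D_*)\cong \tate^{i+1}(G,D_*)$ to collapse $M[n]$ down to $M$.

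First I would form the good truncation $\tau_{\geq n}C_*$, whose $i$-th term is $C_i$ for $i>n$, the $\bbZ G$-submodule $Z_n(C_*)$ of $n$-cycles for $i=n$, and $0$ for $i<n$, with boundary inherited from $C_*$. The inclusion $\tau_{\geq n}C_*\hookrightarrow C_*$ is a chain map of $\bbZ G$-modules, and a direct check shows it is a weak equivalence in every degree, using the hypothesis $H_i(C_*)=0$ for $i<n$. Next I would consider the quotient map $\tau_{\geq n}C_*\twoheadrightarrow M[n]$ given in degree $n$ by the projection $Z_n(C_*)\to Z_n(C_*)/\partial C_{n+1}=M$ and by zero in all higher degrees; this is a chain map because for any $x\in C_{n+1}$ the element $\partial x$ already lies in $\partial C_{n+1}$, and it is a weak equivalence since both complexes are acyclic outside degree $n$ and agree with $M$ there.

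Applying $\cHom_G(F_*,-)$ to this zigzag of weak equivalences of nonnegative chain complexes, the refinement of Brown's result quoted just before the proposition produces weak equivalences of the resulting cochain complexes, hence isomorphisms
\[\tate^i(G,C_*)\cong \tate^i(G,\tau_{\geq n}C_*)\cong \tate^i(G,M[n]).\]
Iterating the shift formula $n$ times then gives $\tate^i(G,M[n])\cong \tate^{i+n}(G,M)$, completing the proof. The main thing to be careful about is ensuring nonnegativity at every step so that Brown's hypothesis applies; this is automatic provided $n\geq 0$, and if $n<0$ the nonnegativity of $C_*$ already forces $M=0$ so the statement is trivial.
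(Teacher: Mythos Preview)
Your proof is correct and is essentially identical to the paper's: the paper's intermediate complex $D_*$ is exactly your good truncation $\tau_{\geq n}C_*$, and its $E_*$ is your $M[n]$, with the same zigzag of weak equivalences feeding into the $\cHom$ weak-equivalence property. Your added remark handling the trivial case $n<0$ is a small bonus not in the original.
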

\begin{proof}
Let $Z_n$ denote the group of $n$-cycles in $C_*$. We have the following weak equivalences:
\[\minCDarrowwidth20pt\begin{CD}
D_* :  \cdots @>>> C_{n+1}   @>>> Z_n  @>>> 0   @>>> \cdots \\
       @.   @VidVV         @VVV      @VVV     @.       \\
C_* : \cdots @>>> C_{n+1}   @>>> C_n  @>>> C_{n-1}  @>>> \cdots
\end{CD}\]
and
\[\minCDarrowwidth20pt\begin{CD}
D_* : \cdots  @>>> C_{n+1}   @>>> Z_n  @>>> 0  @>>> \cdots \\
     @.   @VVV           @VVV      @VVV     @.       \\
E_* : \cdots @>>> 0         @>>> M    @>>> 0   @>>> \cdots.
\end{CD}\]
Therefore, $\tate^i(G,C_*) \cong \tate^i(G,D_*) \cong \tate^i(G,E_*) \cong \tate^{i+n}(G,M)$.
\end{proof}

An exact sequence $K \maprt{f} L \maprt{g} M$ of $\bbZ G$-modules is called {\it admissible} if the inclusion map $\image (g) \hookrightarrow M$ is $\bbZ$-split (see \cite[p.~129]{brown}). A $\bbZ G$-module $M$ is called relatively injective if $\Hom_G(-,M)$ takes an admissible exact sequence to an exact sequence of abelian groups. Projective $\bbZ G$-modules are relatively injective (see \cite[p.~130]{brown}). Since a complete $\bbZ G$-resolution $F_*$ of $\bbZ$ is an exact sequence of free $\bbZ G$-modules, the sequence $F_{i+1} \to F_i \to F_{i-1}$ is admissible for all $i$. Hence if $P$ is a  projective $\bbZ G$-module, then the Tate cohomology group $\tate^i(G,P)=0$ for all $i$. This result generalizes to hypercohomology. 

\begin{prop}\label{trivial tate cohomology lemma}
If $P_*$ is a chain complex of projective $\bbZ G$-modules which has finite length, then $\tate^i(G,P_*)=0$ for all $i$.
\end{prop}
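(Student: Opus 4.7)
The plan is to proceed by induction on the number of nonzero terms of $P_*$. For the base case, assume $P_*$ has a single nonzero term $P_n$ in degree $n$, with $P_n$ projective. Iterating the shift identity $\tate^i(G,\Sigma C_*)\cong\tate^{i+1}(G,C_*)$ recorded just before the proposition gives $\tate^i(G,P_*)\cong\tate^{i+n}(G,P_n)$, and the right-hand side vanishes because projective modules have trivial Tate cohomology (the classical fact recalled in the paragraph preceding the proposition).

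For the inductive step, suppose the result is known for every chain complex of projectives with fewer than $\ell\geq 2$ nonzero terms, and let $P_*$ have exactly $\ell$ such terms. Let $a$ denote the smallest degree with $P_a\neq 0$; since $P_{a-1}=0$, the module $P_a$ placed in degree $a$ forms a subcomplex $P''_*\hookrightarrow P_*$, closed under the differential for trivial reasons. The quotient $P'_*=P_*/P''_*$ is then a chain complex of projectives with exactly $\ell-1$ nonzero terms (each $P'_i$ equals either $P_i$ or $0$, so projectivity is preserved). Applying the long exact sequence for Tate hypercohomology associated to
\[0\to P''_*\to P_*\to P'_*\to 0\]
yields
\[\cdots\to\tate^i(G,P''_*)\to\tate^i(G,P_*)\to\tate^i(G,P'_*)\to\cdots.\]
By the base case the left term vanishes, and by the induction hypothesis the right term vanishes, forcing $\tate^i(G,P_*)=0$ for every $i$.

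The argument is essentially formal once the tools of the section are in place, so no step constitutes a serious obstacle. The only point requiring a moment's attention is that $P''_*$ must genuinely be a subcomplex, which is automatic precisely because we split off the lowest nonzero degree: the restricted differential out of $P_a$ lands in $P_{a-1}=0\subseteq P''_{a-1}$. Everything else is bookkeeping and a routine application of the hypercohomology long exact sequence.
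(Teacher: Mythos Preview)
Your proof is correct and follows essentially the same inductive strategy as the paper's. The paper first shifts $P_*$ so that it is nonnegative and then inducts on its dimension, whereas you induct directly on the number of nonzero terms by splitting off the lowest one; these are the same argument with slightly different bookkeeping.
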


\begin{proof}  Recall that we say a chain complex $C_*$ has finite length if there are integers $n$ and $m$ such that $C_i=0$ for all $i>n$ and $i<m$. By shifting $P_*$ if necessary, we can assume that $P_*$ is a finite dimensional nonnegative chain complex and prove the proposition by an easy induction on the dimension of $P_*$. 
\end{proof}

We say that two chain complexes $C_*$ and $D_*$ are {\it freely equivalent} if there is a sequence of chain complexes $C_*=E^0_*, \dots, E^n_*=D_*$ such that either $E_*^i$ is an extension of $E_* ^{i-1}$ or $E_* ^{i-1}$ is an extension of $E_*^i$ by a finite length chain complex of free modules. Note that we say a chain complex $D_*$ is an extension of $C_*$ by a finite length chain complex of free modules if there is short exact sequence of chain complexes either of the form $0 \to C_* \to D_* \to F_* \to 0$ or of the form $0 \to F_* \to D_* \to C_* \to 0$, where $F_*$ is a finite length chain complex of free modules. As a corollary of Proposition \ref{trivial tate cohomology lemma}, we have:

\begin{cor}\label{free equivalence lemma}
If two chain complexes $C_*$ and $D_*$ are freely equivalent, then $\tate^i(G,C_*) \cong \tate^i(G,D_*)$ for all $i$.
\end{cor}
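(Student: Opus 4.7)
The plan is to reduce to the case of a single extension and then invoke the long exact sequence for Tate hypercohomology together with Proposition \ref{trivial tate cohomology lemma}.

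By the definition of free equivalence, there is a finite chain $C_* = E^0_*, E^1_*, \ldots, E^n_* = D_*$ of chain complexes such that each consecutive pair is related by a short exact sequence of one of the forms
\[ 0 \to E^{i-1}_* \to E^i_* \to F_* \to 0 \qquad \text{or} \qquad 0 \to F_* \to E^i_* \to E^{i-1}_* \to 0 \]
with $F_*$ a finite length chain complex of free $\bbZ G$-modules. By composing isomorphisms, it suffices to show $\tate^i(G, E^{j-1}_*) \cong \tate^i(G, E^j_*)$ for each $j$, so by a straightforward induction on $n$ we reduce to the case of a single short exact sequence $0 \to C_* \to D_* \to F_* \to 0$ (the other form is treated identically).

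Next, I would apply the long exact sequence of Tate hypercohomology noted earlier in Section \ref{sect:hypercohomology}:
\[ \cdots \to \tate^i(G, C_*) \to \tate^i(G, D_*) \to \tate^i(G, F_*) \to \tate^{i+1}(G, C_*) \to \cdots . \]
Since $F_*$ is a finite length chain complex of free (hence projective) $\bbZ G$-modules, Proposition \ref{trivial tate cohomology lemma} gives $\tate^i(G, F_*) = 0$ for every $i$. The long exact sequence then forces the natural map $\tate^i(G, C_*) \to \tate^i(G, D_*)$ to be an isomorphism for all $i$, which is what we wanted.

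There is no real obstacle here; the only point that requires a little care is checking that the induction handles both directions of extension uniformly, but since the argument only uses vanishing of $\tate^*(G, F_*)$ and the long exact sequence, which is symmetric in $C_*$ and $D_*$ up to a shift, the same reasoning applies in both cases.
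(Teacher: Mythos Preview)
Your argument is correct and is exactly the reasoning the paper intends: the corollary is stated in the paper without proof, as an immediate consequence of Proposition~\ref{trivial tate cohomology lemma} together with the long exact sequence in Tate hypercohomology, and you have simply written out those details. The only minor remark is that the paper's definition of ``$D_*$ is an extension of $C_*$'' already allows two forms of short exact sequence, and the free-equivalence chain further allows either $E^i_*$ or $E^{i-1}_*$ to sit in the middle, giving four cases in all; but as you note, the long exact sequence argument is completely symmetric, so nothing changes.
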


Before we conclude this section, we would like to note that there is a hypercohomology spectral sequence which 
converges to the Tate hypercohomology $\tate ^* (G, C_*)$ for a given chain complex $C_*$ of $\bbZ G$-modules. One way to obtain this spectral sequence is to consider the double complex $D^{p,q}=\Hom _{G} (F_p, C_{-q})$ where the vertical and horizontal differentials are given by $\delta _0=\Hom ( - , \partial )$ and $\delta _1=\Hom (\partial , -)$. Note that the total complex ${\rm Tot } D^{*,*}$ with $${\rm Tot}^n D^{*,*}=\bigoplus _{p+q=n} D^{p,q}$$ and $\delta ^n=\delta _0 - (-1)^n \delta _1$ is a cochain complex homotopy equivalent to the cochain complex $\cHom _G (F_* , C_*)$. Filtering this double complex with respect to the index $p$ and then with respect to the index $q$, we obtain two spectral sequences $${}^I E_2 ^{p,q} =\tate ^p (G, H_{-q} (C_*))\Rightarrow  \tate ^{p+q} (G, C_*)$$
$${}^{II} E_1 ^{p,q} =\tate ^q (G, C_{-p})\Rightarrow  \tate ^{p+q} (G, C_*).$$
Note that using these two spectral sequences it is possible to give alternative proofs for Proposition \ref{concentrated homology lemma} and \ref{trivial tate cohomology lemma}.

\section{Habegger's Theorem}
\label{sect:Habegger}

In \cite[p.~433-434]{habegger}, Habegger uses a technique to ``glue" homology groups of a chain complex at different dimensions. This technique will be crucial in the proof of Theorem \ref{thm:maintheorem}, so we give a proof for it here. Before we state Habegger's theorem, we recall the definition of syzygies of modules. 

For every positive integer $n$, the $n$-th syzygy of a $\bbZ G$-module $M$ is defined as the kernel of $\partial_{n-1}$ in a partial resolution of the form 
$$ P_{n-1}\maprt{\partial _{n-1}} \cdots \to P_1 \maprt{\partial _1} P_0 \to M \to 0$$
where $P_0,\dots, P_{n-1}$ are projective $\bbZ G$-modules. We denote the $n$-th syzygy of $M$ by $\Omega ^n M$ and by convention we take $\Omega ^0M=M$.

The $n$-th syzygy of a module $M$ is well-defined only up to stable equivalence.  Recall that two $\bbZ G$-modules $M$ and $N$ are called stably equivalent if there are projective $\bbZ G$-modules $P$ and $Q$ such that $M\oplus P \cong N \oplus Q$. Well-definedness of syzygies up to stable equivalence follows from a generalization of Schanuel's lemma (see \cite[p.~193]{brown}). Since for any two stably equivalent modules $M$ and $N$, we have $\tate ^i (G, M)\cong \tate ^i (G, N)$ for all $i$, we will ignore the fact that syzygies are well-defined only up to stable equivalence and treat $\Omega ^n M$ as a unique module depending only on $M$ and $n$. Alternatively, one can fix a resolution for every $\bbZ G$-module $M$ and define $\Omega ^n M$ as the kernel of $\partial _{n-1}$ in this unique resolution. 

\begin{thm}[Habegger \cite{habegger}]\label{gluing lemma}
Let $C_*$ be a chain complex of $\bbZ G$-modules and $n,m$ are integers such that $m<n$. If $H_k(C_*)=0$ for all $k$ with $m<k<n$, then $C_*$ is freely equivalent to a chain complex $D_*$ such that

(i) $H_i(D_*)=H_i(C_*)$ for every $i \neq n,m$;

(ii) $H_m(D_*)=0$, and;

(iii) there is an exact sequence of $\bbZ G$-modules
\[0 \to H_n(C_*) \to H_n(D_*) \to \Omega^{n-m} H_m(C_*) \to 0.\]
\end{thm}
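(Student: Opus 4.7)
The plan is to realize $D_*$ as the mapping cone of a chain map $\varphi\colon E_*\to C_*$, where $E_*$ is a truncated free resolution of $M:=H_m(C_*)$ placed in homological degrees $m$ through $n-1$. Setting $N=n-m$ and fixing a free resolution $\cdots\to F_1\to F_0\to M\to 0$, define $E_{m+j}=F_j$ for $0\le j\le N-1$ and $E_i=0$ otherwise; this is a finite length chain complex of free $\bbZ G$-modules whose only potentially nonvanishing homology groups are $H_m(E_*)=M$ and (when $N>1$) $H_{n-1}(E_*)=\Omega^N M$.

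The second step is to build $\varphi$ inducing the identity on $H_m$. Lift the augmentation $F_0\twoheadrightarrow M$ through $Z_m(C_*)\twoheadrightarrow H_m(C_*)$ using projectivity of $F_0$ to obtain $\varphi_m\colon E_m\to C_m$; because $\varphi_m$ then factors through $Z_m(C_*)$ and reduces to the augmentation modulo boundaries, $\varphi_m\circ\partial_E$ already lands in $B_m(C_*)$. Inductively, given $\varphi_i$ with $m\le i\le n-2$, the composition $\varphi_i\circ\partial_E\colon E_{i+1}\to C_i$ lands in $Z_i(C_*)$ by the chain-map relation and in fact in $B_i(C_*)$ because $H_i(C_*)=0$ in the range $m<i<n$; projectivity of $E_{i+1}$ then produces a lift $\varphi_{i+1}$ with $\partial_C\varphi_{i+1}=\varphi_i\partial_E$.

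Set $D_*=\mathrm{Cone}(\varphi)_*$ with $D_i=C_i\oplus E_{i-1}$ and the standard mapping cone differential. The canonical short exact sequence $0\to C_*\to D_*\to \Sigma E_*\to 0$ displays $D_*$ as an extension of $C_*$ by the finite length free chain complex $\Sigma E_*$, so $C_*$ and $D_*$ are freely equivalent by definition. The three homology assertions now follow mechanically from the long exact sequence of the mapping cone: $\varphi$ induces an isomorphism on $H_m$, forcing $H_m(D_*)=0$; vanishing of $H_k(E_*)$ and $H_k(C_*)$ in the intermediate range gives $H_i(D_*)\cong H_i(C_*)$ for $i\ne m,n$; and at degree $n$ the relevant segment of the sequence collapses to $0\to H_n(C_*)\to H_n(D_*)\to\Omega^N M\to 0$, where the last term appears either as $H_{n-1}(E_*)$ (when $N>1$) or via the connecting map as $\ker(F_0\to M)$ (when $N=1$).

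The main delicate point is the inductive lifting, which uses the full strength of the hypothesis $H_k(C_*)=0$ for $m<k<n$, together with the identification of the third term of the final exact sequence as the $N$-th syzygy $\Omega^N M$. Since syzygies are well-defined only up to stable equivalence but the conclusion is ultimately used through its effect on Tate cohomology, where stably equivalent modules agree, no care is required in choosing the resolution.
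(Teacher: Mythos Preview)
Your proof is correct and follows essentially the same approach as the paper: build a chain map from a truncated free resolution of $H_m(C_*)$ (placed in degrees $m$ through $n-1$) into $C_*$ by lifting the identity on $H_m$ using projectivity and the acyclicity of $C_*$ in the intermediate range, take $D_*$ to be the mapping cone, and read off the homology from the long exact sequence of $0\to C_*\to D_*\to \Sigma E_*\to 0$, with the same case split $N>1$ versus $N=1$. One minor imprecision worth fixing: when $N=1$ you have $H_m(E_*)=F_0$ rather than $M$, so $\varphi_*$ on $H_m$ is the surjection $F_0\twoheadrightarrow M$ rather than an isomorphism---but surjectivity is all that is needed for $H_m(D_*)=0$, and you already handle this case correctly when you identify the third term as $\ker(F_0\to M)=\Omega^1 M$.
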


\begin{proof}
Let $F_{n-1} \to \cdots \to F_m \to H_m(C_*) \to 0$ be an exact sequence where $F_i$'s are free $\bbZ G$-modules.  Consider the following diagram
\[\minCDarrowwidth20pt\begin{CD}
\cdots @>>> 0   @>>> F_{n-1} @>>> \cdots @>>> F_m @>>> H_m(C_*) @>>> 0    @>>> \cdots \\
    @.       @.           @.       @.       @.   @VidVV        @VVV         \\
\cdots @>>> C_n @>>> C_{n-1} @>>> \cdots @>>> Z_m @>>> H_m(C_*) @>>> 0    @>>> \cdots
\end{CD}\]
where $Z_m$ denotes the group of $m$-cycles in $C_*$. Since all $F_i$'s are projective and the bottom row has no homology below dimension $n$, the identity map extends to a chain map between rows. Notice that this chain map gives a chain map $f_*:F_* \to C_*$ as follows
\[\minCDarrowwidth20pt\begin{CD}
\cdots @>>> 0 @>>> F_{n-1} @>>> \cdots @>>> F_m       @>>> 0        @>>> \cdots \\
@.   @VVV     @Vf_{n-1}VV   @.   @Vf_mVV    @VVV  \\
\cdots @>>> C_n @>>> C_{n-1}  @>>> \cdots @>>> C_m  @>>> C_{m-1}  @>>>\cdots .
\end{CD}\]
where the maps $f_i : F_i \to C_i$ for $i>m$ are the same as the maps in the first diagram above. The map $f_m: F_m \to C_m$ is defined as the composition $$F_m \maprt{f_m '} Z_m \hookrightarrow C_m$$ where $f_m': F_m \to Z_m$ is the map defined as the lifting of the identity map in the first diagram.

Now, let $D_*$ be the mapping cone of $f_*$. We have the following short exact sequence of the form
\[ 0 \to C_* \to D_* \to \Sigma F_* \to 0,\]
so $C_*$ is freely equivalent to $D_*$. The corresponding long exact sequence of homology groups is
\[\minCDarrowwidth20pt\begin{CD} \cdots @>>> H_i(F_*) @>f_*>> H_i(C_*) @>>> H_i(D_*) @>>> H_{i-1}(F_*) @>>> \cdots .\end{CD}\]

Assume first that $n>m+1$. Then $F_*$ has at least two terms and its homology is nonzero only at two dimensions $n-1$ and $m$. So, $H_i(C_*)\cong H_i(D_*)$ for all $i$ such that $i\neq m, m+1, n-1, n$. At dimension $m$, the map $f_*:H_m(F_*) \to H_m(C_*)$ is an isomorphism, so we get $H_m(D_*)=H_{m+1}(D_*)=0$. At dimension $n-1$, we have $H_{n-1} (C_*)=0$, so we get $H_{n-1}(D_*)=0$. We also have a short exact sequence of the form
\[\minCDarrowwidth20pt\begin{CD} 0 @>>> H_n (C_*) @>>> H_n (D_*) @>>> H_{n-1}(F_*) @>>>  0. \end{CD}\]
Since $H_{n-1} (F_*)\cong \Omega ^{n-m} (H_m(C_*))$, this gives the desired result.

If $n=m+1$, then $F_*$ has a single term $F_m$, so we have a sequence of the form
\[\minCDarrowwidth20pt\begin{CD} 0 @>>> H_{n} (C_*) @>>> H_n (D_*) @>>> F_m @>f_*>>  H_m(C_*) @>>> H_m (D_*) @>>> 0. \end{CD}\]
Since $f_*$ is surjective by construction, we conclude that $H_m(D_*)=0$ and there is a short exact sequence of the form 
\[\minCDarrowwidth20pt\begin{CD} 0 @>>> H_n (C_*) @>>> H_n (D_*) @>>> \Omega ^1 ( H_m(C_*)) @>>>  0 \end{CD}\]
as desired.
\end{proof}

\section{Exponents of Tate Cohomology Groups}
\label{sect:exponents}

To prove the main theorem, we need some results about the exponents of Tate cohomology groups. We first recall some 
definitions. The exponent of a finite abelian group $A$ is defined as the smallest positive integer $n$ such that $na=0$ for all $a\in A$. We denote the exponent of $A$ by $exp A$. Note that if $A\to B \to C $ is an exact sequence of finite abelian groups, then $exp B$ divides $exp A \cdot exp C $. In this situation we sometimes write $exp B/exp A $ divides $exp C$ to refer to the same fact even though $exp B / exp A$ may not be an integer in general.   

The first result we prove is a proposition on the exponent of Tate cohomology group with coefficients in a filtered module. First let us explain the terminology that we will be using throughout the paper. Let $M$ be a $\bbZ G$-module and $A_1, A_2,\dots, A_n$ be a sequence of $\bbZ G$-modules. If $M$ has a filtration $0 = M_0 \subseteq M_1 \subseteq \dots \subseteq M_n=M$ such that  $M_j/M_{j-1}\cong A_j$ for all $j$, then we say $M$ has a filtration with sections $A_1-A_2-\cdots -A_n$.

\begin{prop}\label{filtration lemma} Let $M$ be a $\bbZ G$-module which has a filtration with sections $A_1 - A_2-\dots - A_n$. Then, $exp \tate^i(G,M)$ divides $\prod_{j=1}^n exp \tate^i(G,A_j)$.
\end{prop}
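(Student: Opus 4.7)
The plan is to prove this by induction on the length $n$ of the filtration. The base case $n=1$ is trivial since $M \cong A_1$. For the inductive step, I would use the short exact sequence of $\bbZ G$-modules
\[ 0 \to M_{n-1} \to M \to A_n \to 0. \]
Viewing this as a short exact sequence of chain complexes concentrated in degree $0$, the long exact sequence in Tate hypercohomology recalled in Section~\ref{sect:hypercohomology} (together with Proposition~\ref{concentrated homology lemma}, which identifies Tate hypercohomology of a concentrated complex with ordinary Tate cohomology) gives a fragment
\[ \tate^i(G, M_{n-1}) \to \tate^i(G, M) \to \tate^i(G, A_n). \]

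Now I would apply the general principle recorded in the preamble of this section: whenever $A \to B \to C$ is exact, $\exp B$ divides $\exp A \cdot \exp C$. Applied to the above fragment, this yields
\[ \exp \tate^i(G, M) \;\Big|\; \exp \tate^i(G, M_{n-1}) \cdot \exp \tate^i(G, A_n). \]
By the induction hypothesis, $\exp \tate^i(G, M_{n-1})$ divides $\prod_{j=1}^{n-1} \exp \tate^i(G, A_j)$, and multiplying by the $A_n$ factor closes the induction.

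There is no real obstacle here; the proof is a one-line induction once the long exact sequence and the elementary exponent fact are in place. The only minor subtlety worth flagging is making sure the long exact sequence genuinely applies to the short exact sequence of modules (as opposed to just chain complexes of modules), which is why I would explicitly invoke the concentrated-in-degree-$0$ interpretation together with Proposition~\ref{concentrated homology lemma}.
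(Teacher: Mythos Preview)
Your proposal is correct and is essentially the same argument as the paper's: both use the short exact sequences $0\to M_{j-1}\to M_j\to A_j\to 0$, the resulting long exact Tate cohomology sequence, and the elementary exponent fact, with the paper phrasing it as a telescoping product of divisibility relations rather than an explicit induction. The detour through hypercohomology and Proposition~\ref{concentrated homology lemma} is unnecessary, since the ordinary long exact sequence in Tate cohomology for a short exact sequence of modules is standard, but it does no harm.
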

\begin{proof}
Let $0=M_0 \subseteq M_1 \subseteq \dots  \subseteq M_n=M$ be the filtration of $M$ with the sections as above. Then for every $j$, we have an exact sequence of $\bbZ G$-modules
\[0 \to M_{j-1} \to M_j \to A_j \to 0,\]
which gives a long exact Tate cohomology sequence of the following form
\[\dots \to \tate^i(G,M_{j-1}) \to \tate^i(G,M_j) \to \tate^i(G,A_j) \to \dotsb . \]
From this we observe that $exp \tate^i(G,M_j)/exp \tate^i(G,M_{j-1})$ divides the exponent of $\tate^i(G,A_j)$. Multiplying these relations through all $j=1,\dots,n$, we get $exp \tate^i(G,M)$ divides $\prod_{j=1}^n exp \tate^i(G,A_j)$.
\end{proof}

In \cite{browder}, Browder proves a theorem which gives an upper bound on the order of a finite group $G$ in terms of the exponents of cohomology groups with coefficients in homology groups of a CW-complex on which $G$ acts freely. Since we use this theorem in the proof of our main theorem, we state it below and give a proof for it. The proof we give here is slightly different than the original proof. It uses Theorem \ref{gluing lemma} and  Proposition \ref{filtration lemma}.

\begin{thm}[Browder \cite{browder}]\label{thm:browder}
Let $C_*$ be a nonnegative, free, connected chain complex of dimension $n$. Then $|G|$ divides $\prod_{j=1}^n expH^{j+1}(G,H_j(C_*))$.
\end{thm}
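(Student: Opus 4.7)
The plan is to use Habegger's gluing theorem iteratively to replace $C_*$ by a freely equivalent complex whose homology is concentrated in a single dimension, and then exploit the filtration bound of Proposition \ref{filtration lemma} to isolate $|G|$.

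First I would apply Theorem \ref{gluing lemma} repeatedly, each time pushing the lowest remaining nonzero homology up into dimension $n$. Starting with $m=n-1$, then $m=n-2$, and so on down to $m=0$, each iteration is legal because the previous steps killed all homology strictly between the current $m$ and $n$. After $n$ steps I obtain a nonnegative chain complex $D_*$ freely equivalent to $C_*$, with homology concentrated at dimension $n$ equal to a module $M$ that carries a filtration with successive sections
\[H_n(C_*),\ \Omega H_{n-1}(C_*),\ \Omega^2 H_{n-2}(C_*),\ \ldots,\ \Omega^{n-1}H_1(C_*),\ \Omega^n H_0(C_*)=\Omega^n\bbZ,\]
where the last equality uses the connectedness hypothesis $H_0(C_*)=\bbZ$.

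Next I combine three facts: $C_*$ is finite length and free, so $\tate^*(G,C_*)=0$ by Proposition \ref{trivial tate cohomology lemma}; free equivalence preserves Tate cohomology by Corollary \ref{free equivalence lemma}; and $D_*$ has homology concentrated at dimension $n$, so $\tate^*(G,M)=0$ by Proposition \ref{concentrated homology lemma}. Let $N\subseteq M$ be the penultimate submodule of this filtration, so that $M/N\cong \Omega^n\bbZ$ and $N$ inherits a filtration with sections $H_n(C_*),\Omega H_{n-1}(C_*),\ldots,\Omega^{n-1}H_1(C_*)$. The long exact Tate cohomology sequence of $0\to N\to M\to \Omega^n\bbZ\to 0$, combined with the vanishing of $\tate^*(G,M)$ and the dimension shift $\tate^{i-1}(G,\Omega^n\bbZ)\cong \tate^{i-1-n}(G,\bbZ)$, yields $\tate^i(G,N)\cong \tate^{i-1-n}(G,\bbZ)$ for all $i$. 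Choosing $i=n+1$ gives $\tate^{n+1}(G,N)\cong \tate^0(G,\bbZ)\cong \bbZ/|G|$, so $\exp\tate^{n+1}(G,N)=|G|$ exactly.

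Finally, Proposition \ref{filtration lemma} applied to $N$ at degree $n+1$ bounds this exponent by $\prod_{j=1}^n \exp\tate^{n+1}(G,\Omega^{n-j}H_j(C_*))$, and a further shift $\tate^{n+1}(G,\Omega^{n-j}H_j(C_*))\cong \tate^{j+1}(G,H_j(C_*))=H^{j+1}(G,H_j(C_*))$ (the last identification using $j+1\geq 2$) produces the desired divisibility. The main obstacle is choosing the cohomological degree and the submodule correctly: applying Proposition \ref{filtration lemma} naively to all of $M$ yields only the trivial bound $1\mid\prod_j\exp H^*(G,H_j)$, and the essential trick is to peel off the final filtration piece $\Omega^n\bbZ$ and exploit $\tate^0(G,\bbZ)\cong\bbZ/|G|$, which forces $|G|$ onto the left-hand side while the degree choice $i=n+1$ realigns the syzygy shifts on the right to match precisely the cohomology groups appearing in Browder's inequality.
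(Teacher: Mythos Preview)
Your proof is correct and essentially identical to the paper's: both iterate Theorem~\ref{gluing lemma} to concentrate the homology at dimension $n$ in a module $M$ with the stated filtration, use Propositions~\ref{trivial tate cohomology lemma}, \ref{concentrated homology lemma} and Corollary~\ref{free equivalence lemma} to see $\tate^*(G,M)=0$, then peel off the top section $\Omega^n\bbZ$ to obtain $N$ with $\tate^{n+1}(G,N)\cong\bbZ/|G|$, and finish with Proposition~\ref{filtration lemma} and the syzygy shift. Even the choice of degree $i=n+1$ and the remark about why one must separate out $\Omega^n\bbZ$ rather than filter all of $M$ match the paper's argument exactly.
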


\begin{proof} Let us take $C^{(0)}_*=C_*$ and for $j=1$ to $n$, define $C^{(j)}_*$ to be the chain complex obtained by $C^{(j-1)}_*$ by applying the method in Theorem \ref{gluing lemma} for the dimensions $n-j$ and $n$. Since $C_*$ is a finite dimensional chain complex of free $\bbZ G$-modules, by Proposition \ref{trivial tate cohomology lemma}, $\tate^i(G,C_*)=0$ for all $i$. Hence by Corollary \ref{free equivalence lemma} and Theorem \ref{gluing lemma}, we have $\tate^i(G,C^{(j)}_*)=0$ for all $i,j$. Notice that $C^{(n)}_*$ is a chain complex with homology concentrated at $n$. Let us denote the homology of $C^{(n)}_*$ at $n$ by $M$. Hence, by Proposition \ref{concentrated homology lemma}, we have $\tate^i(G,M)=0$ for all $i$. By Theorem \ref{gluing lemma}, $M$ has a filtration $$0 \subseteq H_n(C^{(0)}_*) \subseteq \cdots \subseteq H_n(C^{(n-1)}_*) \subseteq H_n(C^{(n)}_*)=M$$ with sections $H_n(C_*)-\Omega^1H_{n-1}(C_*)-\dots -\Omega^{n-1}H_1(C_*)-\Omega^nH_0(C_*)$. If we let $N:= H_n(C^{(n-1)}_*)$, then $N$ has a filtration with sections $H_n(C_*)-\Omega^1H_{n-1}(C_*)-\dots -\Omega^{n-1}H_1(C_*)$ and there is a short exact sequence of the form
\[0 \to N \to M \to \Omega^n H_0 (C_*) \to 0.\]
Note that $H_0(C_*)\cong \bbZ$, so we obtain an exact sequence of the form
\[\cdots \to \tate^n(G,M) \to \tate^n(G,\Omega^n\bbZ) \to \tate^{n+1}(G,N) \to \tate^{n+1}(G,M)\to \cdots .\]

Since $\tate ^i (G, M)=0$ for all $i$, we obtain $\tate^{n+1}(G,N) \cong \tate^n(G,\Omega^n\bbZ) \cong \tate^0(G,\bbZ)\cong \bbZ/|G|$. Hence by Proposition \ref{filtration lemma}, we get $|G|=exp \tate^{n+1}(G,N)$ divides the product 
$$\prod _{j=1} ^n exp \tate^{n+1}(G,\Omega^{n-j}H_j(C_*)).$$ Since $\tate ^{n+1} (G, \Omega ^{n-j} H_j (C_* ))\cong H^{j+1}(G,H_j(C_*))$, this gives the desired result.
\end{proof}

As a corollary of Theorem \ref{thm:browder}, Browder gives a proof for a theorem of G. Carlsson \cite{carlsson} which says that if $G=(\bbZ /p)^r$ acts freely on a finite dimensional CW-complex $X\simeq(S^n )^k$ with trivial action on homology, then $r\leq k$. The main observation is that when  $G=(\bbZ/p)^r$ and $M$ is a trivial 
$\bbZ G$-module, the exponent of $H^i(G,M)$ divides $p$ for all $i \geq 1$. This follows easily by induction on $r$ using properties of the transfer map in group cohomology. So, from the relation given in Theorem \ref{thm:browder}, one obtains that if $G$ acts freely on a finite dimensional CW-complex $X\simeq (S^n )^k$ with trivial action on homology, then $|G|=p^r$ divides $p^k$, which gives $r\leq k$.

Note that the assumption that $G$ acts trivially on the homology of $X$ is crucial in the above argument since for an arbitrary $\bbZ G$-module, the exponent of $H^i (G, M)$ can be as large as the order of $|G|$. In fact, if we take $M=\Omega ^i (\bbZ)$ for some positive integer $i$, then we have $H^i (G, M)\cong \bbZ /|G|$, so the exponent of $H^i (G,M)$ is equal to $|G|$ in this case. Taking the direct sum of all such modules over all $i$, one can obtain a $\bbZ G$-module $M$ such that the exponent of $H ^i (G, M)$ is equal to $|G|$ for every $i\geq 0$. The following theorem says that when $M$ is finitely generated this  situation cannot happen and that the exponent of $H^i (G, M)$ eventually becomes small at high dimensions.  

\begin{thm}[Pakianathan \cite{pakianathan}]\label{jonathan's lemma}
Let $G=(\bbZ/p)^r$ and M be a finitely generated $\bbZ G$-module. Then, there is an integer $N$ such that the exponent of $H^i(G,M)$ divides $p$ for all $i\geq N$.
\end{thm}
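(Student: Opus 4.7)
The plan is to combine a bound on the exponent of $H^i(G, \bbZ)$ (with trivial action) with the fact that $H^*(G, M)$ is finitely generated as a graded module over the cohomology ring $H^*(G, \bbZ)$. Neither ingredient is especially deep individually, and together they reduce the theorem to a one-line manipulation.

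First I would show by induction on $r$ that for $G = (\bbZ/p)^r$ the exponent of $H^i(G, \bbZ)$ divides $p$ for every $i \geq 1$. The case $r = 1$ is the standard calculation $H^{2k}(\bbZ/p, \bbZ) \cong \bbZ/p$ for $k \geq 1$, with vanishing in odd positive degrees. For the inductive step, write $G = \bbZ/p \times H$ with $H = (\bbZ/p)^{r-1}$. The Künneth theorem (applicable since each $H^j$ is a finitely generated abelian group) expresses $H^n(G, \bbZ)$ as a finite direct sum of terms of the shape $H^a(\bbZ/p, \bbZ) \otimes H^b(H, \bbZ)$ with $a+b=n$, together with torsion-product terms $\Tor(H^a(\bbZ/p, \bbZ), H^b(H, \bbZ))$ with $a+b=n+1$. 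For $n \geq 1$, each nonzero summand has at least one tensor factor that is a positive-degree cohomology group of $\bbZ/p$ or of $H$; the former is $\bbZ/p$ by direct calculation, the latter has exponent $p$ by the inductive hypothesis. Every summand therefore has exponent dividing $p$, and so does the direct sum.

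Second, I would invoke the Evens--Venkov finite generation theorem with coefficient ring $\bbZ$: for a finite group $G$ and a finitely generated $\bbZ G$-module $M$, the graded group $H^*(G, M) = \bigoplus_i H^i(G, M)$ is finitely generated as a graded module over $H^*(G, \bbZ)$. This I would cite as a classical result rather than reprove.

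Combining the two, choose homogeneous generators $\zeta_1, \dots, \zeta_s$ of $H^*(G, M)$ over $H^*(G, \bbZ)$, with $\zeta_j \in H^{d_j}(G, M)$, and set $N := 1 + \max_j d_j$. For any $i \geq N$, every element of $H^i(G, M)$ can be written as $\sum_j \alpha_j \zeta_j$ with $\alpha_j \in H^{i - d_j}(G, \bbZ)$ and $i - d_j \geq 1$; by the first ingredient $p \alpha_j = 0$, hence $p\alpha_j\zeta_j = (p\alpha_j)\zeta_j = 0$, and summing gives $p \cdot H^i(G, M) = 0$. The main black box is the Evens--Venkov theorem; the Künneth induction is routine and the final step is immediate from the graded module structure.
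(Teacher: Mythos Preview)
Your proposal is correct and follows essentially the same route as the paper: invoke Evens' finite generation of $H^*(G,M)$ over $H^*(G,\bbZ)$, pick homogeneous generators, set $N$ to be one more than their top degree, and use that positive-degree classes in $H^*(G,\bbZ)$ are $p$-torsion. The only cosmetic difference is that you justify the exponent-$p$ fact for $H^i(G,\bbZ)$ via a K\"unneth induction, whereas the paper alludes to an induction using the transfer map; both arguments are standard and equally short.
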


\begin{proof}
By Theorem 7.4.1 in \cite[p.~87]{evens}, $H^*(G,M)$ is a finitely generated module over the ring $H^*(G,\bbZ)$. Let $m_1,...,m_k$ be homogeneous elements generating $H^*(G,M)$ as an $H^*(G,\bbZ)$-module and let $N= 1+\max _j\{\deg m_j\}$.   If $x \in H^i(G,M)$ such that $i\geq N$, then we can write $x=\Sigma_{j=1}^k\alpha_j m_j$ for some homogeneous elements $\alpha_j$ in $H^* (G, \bbZ)$ with $\deg \alpha _j \geq 1$ for all $j$. Since $exp H^i (G, \bbZ)$ divides $p$ for all $i\geq 1$, we have $p\alpha_j=0$ for all $j$. Hence we obtain $px=\Sigma_{j=1}^kp\alpha_jm_j=0$ as desired.
\end{proof}

\section{Proof of the Main Theorem}
\label{sect:maintheorem}

Let $G=(\bbZ/p)^r$ and $k$, $l$ be positive integers. We will show that there is an integer $N$ such that if $G$ acts freely and cellularly on a CW-complex $X$ homotopy equivalent to $S^{n_1} \times \dots \times S^{n_k}$ where $|n_i-n_j|\leq l$ and $n_i \geq N$ for all $i,j$, then $r \leq k$.

Suppose that $G$ acts freely and cellularly on some CW-complex $X$ homotopy equivalent to $S^{n_1} \times \dots \times S^{n_k}$ where $|n_i -n_j|\leq l$ for all $i,j$. Let $n=\max\{n_i:i=1,\dots,k\}$ and let $a_i=n-n_i$ for all $i$. Consider the cellular chain complex $C_*(X)$ of the CW-complex $X$. The complex $C_*(X)$ is a nonnegative, connected, and finite-dimensional chain complex of free $\bbZ G$-modules and has nonzero homology at the following dimensions other than dimension zero: 
\[
\begin{matrix}
(1) & n-a_1, n-a_2,\dots,n-a_k \\
(2) & 2n-a_1-a_2, 2n-a_1-a_3,\dots,2n-a_{k-1}-a_k \\
& \vdots \\
(j) & jn-(a_1+\dots+a_j), \dots , jn-(a_{k-j+1}+\dots+a_k) \\
&\vdots \\
(k) & kn-(a_1+a_2+\dots+a_k). 
\end{matrix}
\]

If $n>lk$, then we have $n> a_1+\cdots +a_k$ which implies that for all $j$, the dimensions listed on the $j$-th row are strictly larger than the dimensions listed on the $(j-1)$-st row. Since this fact is crucial for our argument,  we will assume that the integer $N$ in the statement of the theorem satisfies $N>lk$ to guarantee that this condition holds. 

Now we can apply Habegger's argument given in Theorem \ref{gluing lemma} to glue all the homology groups at the dimensions listed on the $j$-th row above to the homology at dimension $jn$ for all $j=1,\dots, k$. The resulting complex $D_*$ is a connected, finite-dimensional chain complex of free $\bbZ G$-modules which has homology only at dimensions $0,n,2n,\dots,kn$. Let $M_j:=H_{jn} (D_*)$ for all $j=1,\dots,k$. Note that by construction $M_j$ is a finitely generated $\bbZ G$-module for all $j$ since syzygies of finitely generated $\bbZ G$-modules are finitely generated when $G$ is a finite group.

Now we can apply Theorem \ref{jonathan's lemma} to find an integer $N_j $ for each $j$  such that if $i\geq N_j$, then $exp H^i (G, M_j)$ divides $p$. Suppose that  for a fixed $G=(\bbZ /p)^r$, $k$, and $l$, there are only finitely many possibilities for $\bbZ G$-modules $M_j$'s up to stable equivalence. Then by taking the maximum of $N_j$'s over all possible $M_j$'s, we can find an integer $N^{\max}_j $ for each $j$  such that if $i\geq N^{\max}_j$, then $exp H^i (G, M_j)$ divides $p$ for all possible $M_j$'s that may occur. Then we can take $N=\max _j N_j^{\max}$ and complete the proof in the following way. By Theorem \ref{thm:browder}, we have $ |G|=p^r$ divides $$\prod _{j=1} ^k H^{jn+1} (G, H_{jn} (D_*))=\prod _{j=1} ^k H^{jn+1} (G, M_j).$$ So, if $n\geq N$, then $p^r$ divides $p^k$ which gives $r\leq k$ as desired.

Hence to complete the proof, it only remains to show that for fixed $G=(\bbZ /p)^r$, $k$, and $l$, there are only finitely many possibilities for $\bbZ G$-modules $M_j$'s up to stable equivalence. To show this, first note that for a fixed $l$, there are finitely many $k$-tuples $(a_1,...,a_k)$ with the property that $0 \leq a_i \leq l$ for all $i$. So we can assume that we have a fixed $k$-tuple $(a_1,\dots, a_k)$. Let us also fix an integer $j$ and show there are only finitely many possibilities for $M_j=H_{jn} (D_*)$.

Let $s_1< \dots <  s_m$ be a sequence of integers such that $\{ n-s_1,\dots, n-s_m\}$ is the set of all distinct dimensions on the $j$-th row of the above diagram. Then, by Theorem \ref{gluing lemma} the module $M_j$ has a filtration
$$ 0=K_0 \subseteq K_1 \subseteq \cdots \subseteq K_m=M_j$$
such that $K_i /K_{i-1} \cong \Omega ^{s_i} (A_i)$ where $A_i=H_{jn-s_i} (X)$.  For all $i$, the module $A_i$ is a $\bbZ$-free $\bbZ G $-module with $\bbZ$-rank less than or equal to ${k \choose j}$, so by Jordan-Zassenhaus theorem (see Corollary (79.12) in \cite[p.~563]{curtis}), there are only finitely many possibilities for $A_i$'s up to isomorphism. 

We will inductively show that there exist only finitely many possibilities for $K_i$'s up to stable equivalence. For $i=1$, we have $K_1=\Omega^{s_1} (A_1)$ so this follows from the fact that there are only finitely many possibilities for $A_1$ and that syzygies are well-defined up to stable equivalence. For $i>1$, consider the following short exact sequence:
\[\minCDarrowwidth20pt\begin{CD} 0 @>>> K_{i-1} @>>> K_i @>>> \Omega^{s_i}A_i @>>> 0. \end{CD}\]
By induction we know that there are only a finite number of possibilities for $K_{i-1}$'s up to stable equivalence. By a similar argument as above, the same is true for $\Omega^{s_i} (A_i)$. The extensions like the ones above are classified by the ext-group $\Ext^1 _{\bbZ G} (\Omega ^{s_i} (A_i), K_{i-1} )$ and since both modules are $\bbZ $-free, these ext-groups are well-defined up to stable equivalence. So, it remains to show that $$\Ext^1_{\bbZ G}(\Omega^{s_i} (A_i),K_{i-1})=\Ext^{s_i+1}_{\bbZ G}(A_i,K_{i-1})$$ is a finite group. Note that since both $A_i$ and $K_{i-1}$ are finitely generated, $\Ext^{s_i+1}_{\bbZ G}(A_i,K_{i-1})$ is a finitely generated abelian group. Moreover, since $A_i$ is $\bbZ$-free, it has an exponent divisible by $|G|$. So, $\Ext^{s_i+1}_{\bbZ G}(A_i,K_{i-1})$ is a finite group.  This completes the proof of Theorem \ref{thm:maintheorem}.

We conclude this section with a generalization of Theorem \ref{thm:maintheorem} to non-free actions. The exact statement is as follows.

\begin{thm}\label{thm:genmaintheorem} Let $G=(\bbZ/p)^r$ and $k,l$ be positive integers. Then there exists an integer $N$ (depending on $k,l$ and the group $G$) such that if $G$ acts cellularly on a finite dimensional CW-complex $X$ homotopy equivalent to $S^{n_1} \times \dots \times S^{n_k}$ where $n_i \geq N$ and $|n_i-n_j|\leq l$ for all $i,j$, then $r-s\leq k$ where $s$ is the largest integer such that $|G_x|=p^s$ for some $x\in X$.
\end{thm}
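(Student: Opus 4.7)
The plan is to reduce Theorem \ref{thm:genmaintheorem} to the free case established in Theorem \ref{thm:maintheorem} by restricting the $G$-action to a subgroup $H\leq G$ of rank $r-s$ on which the action is free.

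Since $X$ is a finite-dimensional $G$-CW-complex, only finitely many distinct isotropy subgroups $K_1,\dots,K_t$ occur, and by the hypothesis on $s$ each $K_i$ has $\mathbb{F}_p$-rank at most $s$. The main step is to find a subgroup $H\leq G$ of rank $r-s$ with $H\cap K_i=\{0\}$ for every $i$. Viewing $G=\mathbb{F}_p^r$ additively, this is the problem of choosing an $(r-s)$-dimensional subspace in general position with respect to finitely many subspaces of dimension at most $s$: for each $K_i$ the set of $H$ meeting $K_i$ nontrivially is a proper subvariety of the Grassmannian $\mathrm{Gr}_{r-s}(\mathbb{F}_p^r)$, so the existence of such an $H$ reduces to an elementary counting argument on the Grassmannian.

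Once $H$ is chosen, each permutation module $\bbZ[G/K_i]$ is free as a $\bbZ H$-module, because every $H$-orbit on $G/K_i$ is regular. Hence the cellular chain complex $C_*(X)$ is a finite-dimensional free chain complex over $\bbZ H$, which amounts to saying that $H$ acts freely on $X$. Applying Theorem \ref{thm:maintheorem} to the free action of $H\cong(\bbZ/p)^{r-s}$ on $X\simeq S^{n_1}\times\cdots\times S^{n_k}$ -- with the threshold $N$ enlarged so that it depends on $H$ rather than on $G$, which is permitted since the theorem allows the constant to vary with the group -- then yields $r-s=\rk H\leq k$, as required.

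The main obstacle is the subgroup-existence step: over small finite fields, finitely many proper subspaces really can cover $\mathbb{F}_p^r$ (e.g.\ the three distinct lines in $\mathbb{F}_2^2$), so the generic-position argument is not automatic. I would handle this either by bounding the number $t$ of distinct isotropy subgroups in terms of $k$, $l$ and $G$ using the combinatorial structure of $C_*(X)$ as a product-of-spheres chain complex (which forces the counting argument to go through), or by inducting on $s$: pick $x\in X$ with $|G_x|=p^s$ and a rank-one subgroup $L\leq G_x$, pass to the fixed-point complex $X^L$ on which $G/L\cong(\bbZ/p)^{r-1}$ acts with maximal isotropy rank $s-1$, control the cohomological type of $X^L$ via Smith theory, and invoke the inductive hypothesis.
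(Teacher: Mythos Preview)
Your reduction to the free case hinges entirely on finding a rank-$(r-s)$ subgroup $H\leq G$ with $H\cap K_i=0$ for every isotropy subgroup $K_i$, and this step genuinely fails. Take $G=(\bbZ/2)^2$ acting on the join $X=S^n_a * S^n_b * S^n_{ab}\cong S^{3n+2}$, where on $S^n_g$ the element $g$ acts trivially and a complementary generator acts antipodally. Then every nontrivial cyclic subgroup of $G$ occurs as an isotropy group while $G$ itself fixes no point, so $s=1$; yet there is no rank-$1$ subgroup $H$ missing all three lines in $\mathbb{F}_2^2$. The same join construction with more factors produces examples for larger $r$ and $s$, so the obstruction is not a low-dimensional accident.

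Neither of your proposed repairs closes the gap. Your first fix asks to bound the number $t$ of isotropy subgroups by the combinatorics of the product-of-spheres homology, but the example above already has $k=1$ and still realizes all rank-$1$ subgroups as isotropy; the homotopy type places no constraint on which subgroups stabilize cells. Your second fix, inducting via $X^L$ for a rank-$1$ isotropy subgroup $L$, runs into two problems: Smith theory only tells you that $X^L$ is an $\mathbb{F}_p$-cohomology product of spheres (not that it is homotopy equivalent to one, which is what Theorem~\ref{thm:maintheorem} requires), and more seriously you lose all control over the dimensions $n_i$ of the spheres appearing in $H^*(X^L;\mathbb{F}_p)$, so the hypothesis $n_i\geq N$ cannot be propagated through the induction with a uniform $N$.

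The paper avoids this entirely by working with the full group $G$ throughout. Rerunning the Habegger gluing and the filtration argument behind Theorem~\ref{thm:browder} without assuming freeness shows that $p^k\cdot\tate^0(G,\bbZ)$ lies in the image of $\tate^0(G,C_*(X))\to\tate^0(G,\bbZ)$, so $p^r$ divides $p^k\cdot\exp\tate^0(G,C_*(X))$. The non-free contribution is then controlled by a complexity--exponent bound of Carlson (in Adem's hypercohomology form): since the isotropy subgroups have rank at most $s$, one can choose $s$ homogeneous classes $\zeta_1,\dots,\zeta_s\in H^*(G,\bbZ)$ covering the variety of $\bigoplus_H\bbZ[G/H]$, and tensoring with the Koszul complexes $L_{\zeta_i}$ forces $\exp\tate^*(G,C_*(X))\mid p^s$. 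This gives $r\leq k+s$ directly, with no need to locate a freely acting subgroup.
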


\begin{proof} Let $C_* (X)$ denote the cellular chain complex of $X$ and let $\varepsilon : C_* (X) \to \bbZ$ be the map induced by the constant map $X \to pt$. The arguments in the proof of Theorem \ref{thm:maintheorem} can be repeated to prove that there is an integer $N$ such that if $n_i \geq N$ and $|n_i-n_j |\leq l$ for all $i,j$, then $$p^k \tate ^0 (G, \bbZ ) \subseteq \image \{ \varepsilon : \tate ^0 (G, C_*(X) )\to \tate ^0 (G, \bbZ ) \}.$$ 
This can be seen easily by a fourth quadrant spectral sequence argument or by the filtration argument given in the proof of Theorem \ref{thm:browder}. So, using the inclusion above, we obtain that $|G|=p^r$ divides $p^k \cdot exp \tate ^0 (G, C_*(X))$. Hence the proof will be complete if we can show that $exp \tate ^0 (G, C_*(X) )$ divides $p^s$ where $s$ is the largest integer such that $|G_x|=p^s$ for some $x\in X$.

To prove this last statement we use a theorem due to J.~F.~Carlson which gives a bound for the exponent of $\tate ^* (G, M)$ in terms of the complexity of $M$ (see \cite[Theorem 2.6]{carlson}). In fact we will be using a 
generalization of this result to hypercohomology due to A. Adem (see \cite[Theorem 3.1 and 3.2]{adem}). Let $M=\oplus _{H\in {\rm Iso}(X)} \bbZ [G/H]$ where ${\rm Iso} (X)$ denotes the set of isotropy subgroups of the $G$-action on $X$. Let $s$ be the largest integer such that $|H|=p^s$ for some $H \in {\rm Iso} (X)$. Then, we can find homogeneous elements $\zeta _1,\dots , \zeta _s \in H^* (G, \bbZ)$ of positive degree which cover the variety of $M$ (see \cite{carlson} for more details). This implies, in particular, that $L_{\zeta _1} \otimes \cdots \otimes L_{\zeta _s } \otimes C_* (X)$ is a finite length chain complex of projective $\bbZ G$-modules. Repeating the argument in the proof of Theorem 3.1 in \cite{adem}, one easily gets $\exp \tate ^* (G, C_* (X) ) $ divides $\prod _{i=1} ^s \exp \zeta _i.$ Since $H^i (G, \bbZ )$ has exponent 1 or $p$ for all $i \geq 1$, we get $exp \tate ^0 (G, C_* (X))$ divides $p^s$ as desired.   
\end{proof}

\bigskip

\noindent
Department of Mathematics\\
Bilkent University,\\
Ankara, 06800, Turkey. \\
E-mail addresses: okutan@fen.bilkent.edu.tr, yalcine@fen.bilkent.edu.tr \\

\end{document}